\documentclass{article}

\usepackage{amsmath,amssymb,amsthm,relsize}

\newcommand{\Z}{\mathbb{Z}}

\newtheorem{thm}{Theorem}
\newtheorem{lemma}[thm]{Lemma}

\begin{document}

\title{Diophantine approximation and
the equation $(a^2cx^k - 1)(b^2cy^k - 1) = (abcz^k - 1)^2$}

\author{Eva G. Goedhart and Helen G. Grundman}

\date{}

\maketitle

\begin{abstract}
We prove that the Diophantine equation 
$(a^2cx^k - 1)(b^2cy^k - 1) = (abcz^k - 1)^2$
has no solutions in positive integers with 
$x$, $y$, $z > 1$, $k \geq 7$ and
$a^2x^k \neq b^2y^k$.  
\end{abstract}

\section{Introduction}
\label{intro}

In 2003, Bugeaud and Dujella~\cite{BuDu03} asked when there exists
a set of $m$ positive integers such that one more than the product 
of any two of the integers is a $k$th power, with $k \geq 3$.
Later, Bugeaud~\cite{Bu04} considered the case in which the 
set is of the form $\{1,A,B\}$, with $1 < A < B$.  
He noted that the desired property is equivalent to 
the existence of an integer solution to the equation
$(x^k - 1)(y^k - 1) = (z^k - 1)$ (with appropriate restrictions),
and used this to prove
that this is only possible if $k \leq 74$.  
He also considered variations of the problem, one of which
leads to the equation 
$(x^k - 1)(y^k - 1) = (z^k - 1)^2$.
Bennett~\cite{Be07} proved that each of these equations has
no integer solutions (again, with appropriate restrictions)
if $k\geq 4$, and that this bound on $k$ is sharp.

Of interest here is a 2014 generalization of Bennett's result
by Zhang~\cite{Zh14}, in which it is shown that the equation
$(ax^k - 1)(by^k - 1) = (abz^k - 1)$ has no solutions with 
$a$, $b \in \Z^+$, $|x| > 1$, $|y| > 1$, and $k \geq 4$.
In this paper, we consider a similar modification of the equation
$(x^k - 1)(y^k - 1) = (z^k - 1)^2$
and prove that it has no solutions with $k \geq 7$.

\begin{thm}
\label{mainthm}
Let $a$, $b$, $c$, and $k$ be positive integers with $k \geq 7$.  
The equation
\begin{equation}
\label{maineq}
(a^2cx^k - 1)(b^2cy^k - 1) = (abcz^k - 1)^2
\end{equation}
has no solution in integers with $x$, $y$, $z > 1$ 
and $a^2x^k \neq b^2y^k$.
\end{thm}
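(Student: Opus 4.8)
The plan is to convert a hypothetical solution into an implausibly sharp rational approximation of an algebraic number and then contradict an effective irrationality measure. First I would expand \eqref{maineq} and cancel the constant terms. Writing $X = x^k$, $Y = y^k$, $Z = z^k$ and dividing by $c$, the equation collapses to
\[
a^2 b^2 c\,(XY - Z^2) = a^2 X + b^2 Y - 2ab Z .
\]
Since $a^2 X + b^2 Y - 2ab\sqrt{XY} = (a\sqrt X - b\sqrt Y)^2 \ge 0$, and the hypothesis $a^2 x^k \neq b^2 y^k$ makes this strictly positive, a short estimate using that $Z = z^k$ is large shows $\sqrt{XY} > Z$, i.e. $xy > z^2$. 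Hence $xy \ge z^2 + 1$, which is the gap principle: setting $t = xy/z^2$ gives $t \ge 1 + z^{-2}$.

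Next I would record the exact identity obtained by dividing \eqref{maineq} by $(abc)^2$,
\[
t^k = \frac{\bigl(1 - (abcz^k)^{-1}\bigr)^2}{\bigl(1 - (a^2cx^k)^{-1}\bigr)\bigl(1 - (b^2cy^k)^{-1}\bigr)} ,
\]
whose right-hand side lies within $O(2^{-k})$ of $1$. Combining this upper bound on $t^k - 1$ with the lower bound $t^k - 1 \ge k(t-1) \ge k z^{-2}$ forces $z$, and hence $x$ and $y$, to be large, of size at least about $2^{k/2}$; this confines any solution to the regime where the approximation machinery bites.

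The heart of the argument is that $R := abcz^k - 1$ satisfies $R^2 = PQ$ with $P := a^2cx^k - 1$ and $Q := b^2cy^k - 1$, so $R/Q = \sqrt{P/Q}$ is a rational square root. Comparing this with $\sqrt{a^2 x^k/(b^2 y^k)}$ (we may assume $a^2 x^k > b^2 y^k$ by the symmetry of \eqref{maineq} under $(a,x)\leftrightarrow(b,y)$) and using $R^2 = PQ$, I would derive
\[
\left| \sqrt{\frac{a^2 x^k}{b^2 y^k}} - \frac{abcz^k - 1}{b^2 c y^k - 1} \right| \ll \frac{\lvert a^2 x^k - b^2 y^k\rvert}{Q^2},
\]
exhibiting $R/Q$, a rational of denominator $Q \approx b^2 c y^k$, as an exceptionally good approximation to the quadratic irrational $\sqrt{a^2 x^k/(b^2 y^k)}$. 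I would then invoke an effective irrationality measure of hypergeometric (Thue--Siegel) type, in the style of Bennett, of the form $\lvert\sqrt{r} - p/q\rvert \gg q^{-\mu}$ with an exponent $\mu = \mu(k) < 2$ that improves as $k$ grows. Feeding the lower-bound size information from the gap principle into this estimate produces an upper bound on the variables that is incompatible with the lower bound once $k \ge 7$; the finitely many residual triples $(x,y,z)$ and small exponents would be cleared by direct computation.

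The main obstacle I anticipate is exactly this final balancing. The height of the approximated number and the numerator $\lvert a^2 x^k - b^2 y^k\rvert$ both grow with the variables, so the effective exponent $\mu(k)$ must be pushed below $2$ by a margin large enough, uniformly in $a$, $b$, and $c$, to outrun that growth; securing an irrationality measure with fully explicit constants and checking that the threshold is first met at $k = 7$ rather than some larger value is the delicate point. A secondary technical issue is the dichotomy according to whether $\sqrt{a^2 x^k/(b^2 y^k)}$ is genuinely irrational or (for instance when $k$ is even) rational, the latter case calling for a gap-between-rationals argument in place of the irrationality measure.
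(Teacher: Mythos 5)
Your opening reductions are fine: the expansion $a^2b^2c(XY - Z^2) = a^2X + b^2Y - 2abZ$, the gap principle $xy \ge z^2 + 1$, and the conclusion that any solution forces $z$ to be large all check out (and roughly parallel the opening of the paper's proof). The fatal problem is the tool at the heart of your plan. An effective irrationality measure $\lvert\sqrt{r} - p/q\rvert \gg q^{-\mu}$ with exponent $\mu < 2$ does not exist for \emph{any} irrational number: by Dirichlet's theorem every irrational $\theta$ has infinitely many rationals with $\lvert\theta - p/q\rvert < q^{-2}$, and a quadratic irrational has irrationality measure exactly $2$. The Thue--Siegel/hypergeometric machinery improves exponents only for algebraic numbers of degree at least $3$ --- this is precisely why Bennett's Lemma~\ref{bennett} carries the hypothesis $n \ge 3$. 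In your setup the exponent $k$ sits uselessly inside the radicand: $\sqrt{a^2x^k/(b^2y^k)}$ is just a quadratic irrational (or a rational), so no exponent below $2$ can ever be extracted, no matter how large $k$ is. Moreover, even granting some hypothetical sub-Liouville bound, the approximation you construct is far too weak to contradict anything: with $m = a^2cx^k$, $n = b^2cy^k$, $P = m-1$, $Q = n-1$, one has $\lvert P/Q - m/n\rvert = \lvert m-n\rvert/(nQ)$, hence $\lvert R/Q - \sqrt{m/n}\rvert \approx \lvert m-n\rvert/(2\sqrt{m/n}\,nQ)$, which exceeds even the trivial Liouville lower bound $\approx 1/(2\sqrt{mn}\,Q^2)$ by a factor of about $\lvert m-n\rvert\,Q$. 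Nothing in equation~(\ref{maineq}) forces $a^2x^k$ and $b^2y^k$ to be close (the hypothesis only makes them distinct, and $PQ = R^2$ constrains their product, not their ratio), so this factor is unbounded and no balancing of constants can rescue the argument.

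The repair requires approximating an algebraic number whose degree grows with $k$, which is what the paper does. Since $(a^2cx^k-1)(b^2cy^k-1)$ is a perfect square, one can write $a^2cx^k - 1 = uv^2$ and $b^2cy^k - 1 = uw^2$ with $uvw = abcz^k - 1$; then $\beta = xy/z^2$ is an extremely good approximation to the degree-$k$ number $\alpha = \sqrt[k]{1 + 1/(uv^2)}$, with error of size roughly $z^{-k}$ against denominator $z^2$, i.e.\ quality exponent about $k/2$. Bennett's Lemma~\ref{bennett} (legitimately applicable, as the degree is $k \ge 7 \ge 3$) gives a lower bound with exponent $\lambda$, which the paper bounds below $3.7$ for $k \ge 10$ and below smaller thresholds for $k = 7,8,9$ once $a^2cx^k$ is large; since $k/2 > \lambda$, only the finitely many cases $(k, a^2cx^k) \in S$ of Lemma~\ref{notS} survive, and these are eliminated by computing continued fraction convergents of $\alpha/x$ and their partial quotients. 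Your closing remark about the rational-versus-irrational dichotomy for $\sqrt{a^2x^k/(b^2y^k)}$ does not help here: the square-root framing itself is what must be abandoned.
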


We prove Theorem~\ref{mainthm} in the following section.
Our main tools are continued fractions and the following
lemma from~\cite{Be97}, providing a bound on how well one can approximate
certain algebraic numbers by rational numbers.
For $n \geq 2$, define
\[\mu_n = \prod_{\substack{p\mid n\\
p \text{\ \it prime}}}
p^{1/(p-1)}.\]

\begin{lemma}[Bennett]
\label{bennett}

If $n$ and $N$ are positive integers with $n\geq 3$
and
\[\left(\sqrt{N} + \sqrt{N + 1}\right)^{2(n - 2)} >
\left(n\mu_n\right)^n,\]
then, for any $p$, $q\in \Z^+$,
\[\left|\sqrt[\mathlarger{n}]{1 + \frac{1}{N}} - \frac{p}{q} \right|
> (8n\mu_nN)^{-1}q^{-\lambda}\]
with
\[\lambda = 1 + 
\frac{\log\left(\left(\sqrt{N} + \sqrt{N + 1}\right)^2n\mu_n\right)}
{\log{\left(\left(\sqrt{N} + \sqrt{N + 1}\right)^2/n\mu_n\right)}}.\]
\end{lemma}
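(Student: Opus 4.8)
The plan is to prove this by the \emph{hypergeometric method} (Pad\'e approximation to the binomial function), the standard route to effective irrationality measures for $\sqrt[n]{1 + 1/N}$. I would first symmetrize the target. Setting $w = 1/(2N+1)$, one checks that $\frac{1+w}{1-w} = \frac{N+1}{N}$, so that
\[
\alpha := \sqrt[n]{1 + \frac{1}{N}} = \left(\frac{1+w}{1-w}\right)^{1/n}.
\]
This symmetric form is the device that produces the sharp growth constant: since $(2N+1)^2 - 1 = 4N(N+1)$, one computes $\left(\sqrt{N} + \sqrt{N+1}\right)^2 = \frac{1 + \sqrt{1 - w^2}}{w}$, which is exactly the quantity governing the Pad\'e remainder asymptotics below.

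Next, for each integer $r \ge 0$ I would invoke the classical explicit Pad\'e approximants to $(1-z)^{1/n}$: there are polynomials $A_r(z), B_r(z) \in \mathbb{Q}[z]$ of degree $r$, built from the hypergeometric polynomials ${}_2F_1(-r,\, -r - \tfrac{1}{n};\, -2r;\, z)$ and ${}_2F_1(-r,\, -r + \tfrac{1}{n};\, -2r;\, z)$, satisfying
\[
B_r(z) - (1-z)^{1/n} A_r(z) = c_r\, z^{2r+1} E_r(z),
\]
with $c_r$ explicit and $E_r$ a convergent series. Evaluating at the value of $z$ associated with $w$ and combining yields rational linear forms $\Lambda_r = q_r\alpha - p_r$. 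Two estimates drive everything. An \emph{Archimedean} estimate gives $\max(|p_r|,|q_r|) \ll P^r$ and $|\Lambda_r| \ll Q^{-r}$ with
\[
P = \left(\sqrt{N} + \sqrt{N+1}\right)^2 n\mu_n, \qquad Q = \frac{\left(\sqrt{N} + \sqrt{N+1}\right)^2}{n\mu_n},
\]
the factor $\left(\sqrt{N}+\sqrt{N+1}\right)^2$ being precisely the value of $\frac{1+\sqrt{1-w^2}}{w}$ computed above. A \emph{$p$-adic} (denominator) estimate shows that the common denominator of the coefficients of $A_r, B_r$, which involve $\binom{r + 1/n}{j}$, grows like $(n\mu_n)^r$ up to subexponential factors; this is where $\mu_n = \prod_{p \mid n} p^{1/(p-1)}$ enters, via the prime-by-prime bound $v_p(\text{denominator}) \le \tfrac{r}{p-1} + O(\log r)$. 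Clearing denominators makes $\Lambda_r$ an integer form and accounts for the $n\mu_n$ factors in $P$ and $Q$.

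With such a sequence in hand, I would finish via the standard approximation lemma: using the nonvanishing of the $2\times 2$ determinant formed by two consecutive approximant pairs (which follows from the remainder vanishing to exact order $2r+1$), for any $p/q$ one selects the index $r$ with $q_r$ of order $q$ and applies the triangle inequality to $\Lambda_r$ and $q\alpha - p$. This gives $|\alpha - p/q| > C q^{-\lambda}$ with $\lambda = 1 + \log P/\log Q$, matching the stated $\lambda$ exactly, the constant $C$ emerging as $(8n\mu_n N)^{-1}$ after tracking every explicit factor. The hypothesis $\left(\sqrt{N} + \sqrt{N+1}\right)^{2(n-2)} > (n\mu_n)^n$ is exactly the condition forcing $Q > 1$ (equivalently $\left(\sqrt{N} + \sqrt{N+1}\right)^2 > n\mu_n$ with room to spare), so that the forms $\Lambda_r$ genuinely tend to zero and the argument yields a nontrivial exponent.

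\textbf{Main obstacle.} The difficulty lies in the two \emph{sharp} explicit estimates, carried out uniformly in $N$. Obtaining the Archimedean bound with the exact constant $\left(\sqrt{N}+\sqrt{N+1}\right)^2$, rather than a cruder $O(N)$, requires careful asymptotics of the remainder $E_r$ and of $|A_r|,|B_r|$ at the specific evaluation point; the denominator estimate must recover exactly $\mu_n$, not a larger prime product, through precise control of the $p$-adic valuations of the binomial coefficients. Reconciling both so as to land on the clean closed forms for $\lambda$ and for the constant $(8n\mu_n N)^{-1}$, while keeping every inequality valid for all admissible $N$, is the delicate technical core.
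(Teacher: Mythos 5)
This lemma is not proved in the paper at all: it is imported verbatim from Bennett's paper \cite{Be97}, and your outline is precisely the method of that source --- the symmetrization $w = 1/(2N+1)$ with $\bigl(\sqrt{N}+\sqrt{N+1}\bigr)^2 = \bigl(1+\sqrt{1-w^2}\bigr)/w$, explicit Pad\'e approximants to $(1-z)^{1/n}$ built from the ${}_2F_1$ polynomials, the $p$-adic denominator analysis yielding exactly $\mu_n$, and the standard two-approximant determinant argument giving $\lambda = 1 + \log P/\log Q$. So your proposal takes essentially the same route as the actual proof, correctly identifying where each constant in the statement comes from, with only the (admittedly heavy) explicit bookkeeping of Bennett's estimates left to carry out.
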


To summarize our proof,
we first assume that a solution,
$(k,a,b,c,x,y,z)$, to equation~(\ref{maineq}) exists, and prove 
in Lemma~\ref{notS} that it
yields a sufficiently good rational approximation of
\[\sqrt[\mathlarger{k}]{1 + \frac{1}{a^2cx^k - 1}}\]
so that, for large values of $k$, $a$, $c$, or $x$,
Lemma~\ref{bennett} yields a contradiction.
For the finite number of remaining cases, we prove that
one of the continued fraction convergents of
\[\frac{1}{x}\sqrt[\mathlarger{k}]{1 + \frac{1}{a^2cx^k - 1}}
= \sqrt[\mathlarger{k}]{\frac{a^2c}{a^2cx^k - 1}}\]
must equal $y/z^2$,
and derive a bound on the index of the convergent.
Computer calculations then yield a contradiction in each of these cases.

\section{Proof of Theorem~\ref{mainthm}}

Let $a$, $b$, $c$, and $k$, as in Theorem~\ref{mainthm}, be given,
and suppose that $x$, $y$, $z\in \Z^+$ satisfy equation~(\ref{maineq})
with $x$, $y$, $x > 1$ and $a^2x^k \neq b^2y^k$.
Since $(a^2cx^k - 1)(b^2cy^k - 1)$ is a square, there exist positive
integers $u$, $v$, and $w$ such that 
\begin{equation}\label{uvw}
a^2cx^k - 1 = uv^2 \hskip 15pt \text{and}\hskip 15pt  b^2cy^k - 1 = uw^2.
\end{equation}
Then, by equation~(\ref{maineq}), $uvw = abcz^k - 1$.
We assume, without loss of generality (since $a^2x^k \neq b^2y^k$), 
that $v < w$.

Since $w > v$,
$v^2 + w^2 = (w - v)^2 + 2vw > 2vw$ and so
$(a^2cx^k)(b^2cy^k) = (uv^2 + 1)(uw^2 + 1) =
(uvw)^2 + u(v^2 + w^2) + 1 > (uvw)^2 + 2uvw + 1 = (uvw + 1)^2 =
(abcz^k)^2$.  Hence $xy > z^2$.

We now verify that the hypothesis of Lemma~\ref{bennett}
holds with $N = a^2cx^k - 1 = uv^2$.
Note that $uv^2 = a^2cx^k - 1 \geq 2^k - 1 \geq 127$.  
For $t\in \Z^+$, since
$\sqrt{1 - 1/t}$ increases as $t$ increases, we have
$\sqrt{1 - 1/(uv^2 + 1)} \geq \sqrt{1 - 1/128} > .99$.
Thus, 
\begin{align*}
\left(\sqrt{uv^2} + \sqrt{uv^2 + 1}\right)^{2(k - 2)} & =
\left(\left(\sqrt{1 - \frac{1}{uv^2 + 1}} + 1\right)
\sqrt{uv^2 + 1}\right)^{2(k - 2)} \\
& > \left((.99 + 1)\left(2^{k/2}\right)\right)^{2(k - 2)}.
\end{align*}
Since $1.99^{1.01} > 2$ and $k \geq 7$ implies that $2^{k-.6} > k^2$,
\[\left((.99 + 1)\left(2^{k/2}\right)\right)^{2(k - 2)} >
2^{2(k - 2)/1.01 + k(k - 2)} > \left(2^{k - .6}\right)^k
> \left(k^2\right)^k > \left(k\mu_k\right)^k. \]
Therefore, we can apply Lemma~\ref{bennett} with
$N = uv^2$.  Letting
\[\alpha = \sqrt[\mathlarger{k}]{1 + \frac{1}{uv^2}}
\hskip 15pt \text{and}\hskip 15pt 
\beta = \frac{xy}{z^2}\] 
we find that
\begin{equation}\label{alpha-beta>}
\left|\alpha - \beta\right| =
\left| \sqrt[\mathlarger{k}]{1 + \frac{1}{uv^2}} - \frac{xy}{z^2}\right| >
(8k\mu_kuv^2)^{-1}z^{-2\lambda},
\end{equation}
with 
\begin{equation}\label{ourlambda}
\lambda = 1 +
\frac{\log\left(\left(\sqrt{uv^2} + \sqrt{uv^2 + 1}\right)^2k\mu_k\right)}
{\log{\left(\left(\sqrt{uv^2} + \sqrt{uv^2 + 1}\right)^2/k\mu_k\right)}}.
\end{equation} 

Note that
$\lambda = \Lambda_k(a^2cx^k)$, where, for $K\geq 7$ and $D \geq 2^k$,
\[\Lambda_K(D) = 2 +
\frac{2\log(K\mu_K)}{2\log(\sqrt{D - 1} + \sqrt{D})  -
\log(K\mu_K) }.\]
Since $x \geq 2$ and, 
for each value of $K$, $\Lambda_K$ is a decreasing function,
we have that 
\begin{equation}\label{2^k}
\lambda = \Lambda_k(a^2cx^k) \leq \Lambda_k(2^k).
\end{equation}

Now, for $K\geq 7$, let
\[\Lambda(K) = 2 + \frac{6\log K}{2(K + 1)\log 2 - 3\log K}.\]
Since $k \geq 7$, $\mu_k \leq \sqrt{k}$ and so
\begin{equation}\label{Lambdak}
\lambda \leq \Lambda_k(2^k) \leq
2 + \frac{2\log(k^{3/2})}
{2\log\left(2^{(k-1)/2} + 2^{k/2}\right) - \log{k^{3/2}}}
< \Lambda(k).
\end{equation}

Next, we compute an upper bound for $w^{k-2\lambda}$ 
(and thus for $w$), as follows.
Writing $\alpha$ and $\beta$ in terms of $u$, $v$, and $w$,
we have
\begin{equation*}
\begin{aligned}
\alpha^k - \beta^k & = \left(1 + \frac{1}{uv^2}\right)-
\frac{(uv^2 + 1)(uw^2 + 1)}{(uvw + 1)^2} \\
& = \frac{u v^2(2u v w - u v^2) + (2u v w + 1)}{u v^2(u v w + 1)^2},
\end{aligned}
\end{equation*}
which is clearly positive, since $w > v$.  Hence, $\alpha > \beta$.
Further, from the above it follows that 
\begin{equation}\label{alpk-betk}
\begin{aligned}
\alpha^k - \beta^k & < 
\frac{u v^2(2u v w + 2) + (2u v w + 2)}{u v^2(u v w + 1)^2} \\
& = \frac{2}{u v w + 1}\left(1 + \frac{1}{uv^2}\right)
= \frac{2\alpha^k}{uvw + 1}.
\end{aligned}
\end{equation}


Since $xy > z^2$, we have $\alpha > \beta > 1$, and so
\[\alpha^k - \beta^k = 
(\alpha - \beta)\sum_{i=0}^{k-1} \alpha^{k-1-i}\beta^i
> k(\alpha - \beta).\]
Combining this with inequalities~(\ref{alpha-beta>}) 
and~(\ref{alpk-betk}), we have
\begin{equation}\label{wub3}
(8k\mu_kuv^2)^{-1}z^{-2\lambda} < \alpha - \beta
< \frac{2\alpha^k}{k(uvw + 1)} < \frac{2\alpha^k}{kuvw}.
\end{equation}
We also have that \[z^k = \frac{uvw + 1}{abc}
\leq \left(1 + \frac{1}{uvw}\right)uvw < \alpha^kuvw.\]
Using this to eliminate $z$ in
inequality~(\ref{wub3}), then 
solving for $w^{k-2\lambda}$ yields 
\begin{equation}\label{wub}
w^{k-2\lambda} < 2^{4k}\mu_k^k \alpha^{k(k+2\lambda)}
u^{2\lambda}v^{k+2\lambda}.
\end{equation}

To compute a lower bound for $w^2$ (and thus for $w$) 
we note that $xy > z^2$ implies that $xy \geq z^2 + 1$.
Using this with~(\ref{uvw}) and
the binomial theorem, we deduce that
\begin{equation}
\label{wlb1}
\begin{aligned}
uv^2 + uw^2 & = (uv^2 + 1)(uw^2 + 1) - (uvw + 1)^2 + 2uvw \\
& > (a^2cx^k)(b^2cy^k) - (abcz^k)^2 \\
& \geq a^2b^2c^2(z^2 + 1)^k - a^2b^2c^2z^{2k} \\
& > a^2b^2c^2kz^{2(k-1)} + a^2b^2c^2\frac{k(k-1)}{2}z^{2(k-2)}.
\end{aligned}
\end{equation}

Now, $k\geq 7$ and $a$, $b$, $c\geq 1$ imply that
$a^2b^2c^2\frac{k(k-1)}{2}z^{2(k-2)} > abcz^k = uvw + 1 > uv^2$.
Thus, inequality~(\ref{wlb1}) implies that 
$uw^2 > a^2b^2c^2kz^{2(k-1)}$.
So
\[(uw^{2})^k > (a^{2}b^{2}c^{2}kz^{2(k-1)})^k 
\geq k^k(abcz^k)^{2(k-1)}
> k^k(uvw)^{2(k-1)}.\]
Thus, 
\begin{equation}\label{wlb}
w^2 > k^{k}u^{(k-2)}v^{2(k-1)}.
\end{equation}

Combining these two bounds on $w$,
inequalities~(\ref{wub}) and~(\ref{wlb}),
we eliminate $w$ to obtain
\[k^{k(k-2\lambda)}u^{(k-2)(k-2\lambda)}v^{2(k-1)(k-2\lambda)}
< 2^{8k}\mu_k^{2k} \alpha^{2k(k+2\lambda)}
u^{4\lambda}v^{2(k+2\lambda)},\]
and so
\begin{equation}\label{uv^2}
\left(uv^2\right)^{(k-2\lambda-2)} < 
2^{8}\mu_k^{2} \alpha^{2(k+2\lambda)}k^{-(k-2\lambda)}.
\end{equation}

We use this inequality to prove that 
$(k,a^2cx^k) \in S$, where \[S =
\{(7,d)| d < 1035\cdot 2^7\} \cup \{(8,d)| d < 10\cdot 2^8\}.\]

\begin{lemma}\label{notS}
If $x$, $y$, $z\in \Z^+$
satisfy equation~(\ref{maineq}), then $(k,a^2cx^k) \in S$.
\end{lemma}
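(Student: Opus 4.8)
The plan is to read inequality~(\ref{uv^2}) as an upper bound on $D := a^2cx^k = uv^2 + 1$ depending only on $k$, and then to check that this bound forces $(k, D) \in S$. First I would control the factor $\alpha^{2(k+2\lambda)}$ on the right-hand side: since $\alpha^k = 1 + 1/(uv^2)$ with $uv^2 \geq 127$, writing $\alpha^{2(k+2\lambda)} = (1 + 1/(uv^2))^{2 + 4\lambda/k}$ and using $\lambda < \Lambda(k)$ from~(\ref{Lambdak}) shows this factor is bounded above by a small absolute constant. Together with $\mu_k \leq \sqrt{k}$, this bounds the right-hand side of~(\ref{uv^2}) by an explicit quantity of the form $C\,k^{-(k - 2\lambda - 1)}$, which is essentially a function of $k$ alone.

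The decisive feature is the sign of the exponent $k - 2\lambda - 2$ on the left. Here I would use that $\lambda = \Lambda_k(D)$ is a decreasing function of $D$ with $\lambda \to 2^{+}$ as $D \to \infty$, so that $k - 2\lambda - 2 \to k - 6 \geq 1$; thus for all sufficiently large $D$ the exponent is positive. In that range the left-hand side $(D - 1)^{k - 2\lambda - 2}$ is increasing in $D$, while, by the same monotonicity of $\lambda$, the right-hand side is decreasing in $D$. Consequently, for each fixed $k$ there is a single crossover value $D^{\ast}(k)$ beyond which~(\ref{uv^2}) is violated; since~(\ref{uv^2}) must hold for any genuine solution, every solution satisfies $D \leq D^{\ast}(k)$.

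It then remains to locate $D^{\ast}(k)$. For $k \geq 9$ I would show that~(\ref{uv^2}) already fails at the smallest admissible value $D = 2^k$ (recall $x \geq 2$ and $a, c \geq 1$ force $D \geq 2^k$): there the left-hand side grows roughly like $2^{k(k - 6)}$ while the right-hand side stays bounded, so no admissible $D$ can occur and there is no solution. Since this must cover the infinitely many $k \geq 9$ at once, I would phrase the estimate as a monotonicity statement in $k$, reducing it to finitely many explicit checks together with a clean asymptotic bound. For $k = 7$ and $k = 8$ the exponent $k - 2\lambda - 2$ is small and positive near the crossover, and a direct evaluation of~(\ref{uv^2}) with $\lambda = \Lambda_k(D)$ substituted pins down $D^{\ast}(7) \leq 1035 \cdot 2^7$ and $D^{\ast}(8) \leq 10 \cdot 2^8$, which is precisely the assertion $(k, D) \in S$.

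The main obstacle is the case $k = 7$ (and, to a lesser degree, $k = 8$): there $\lambda$ is large enough that $k - 2\lambda - 2$ is only marginally positive near the threshold, so both sides of~(\ref{uv^2}) are of comparable magnitude and the transcendental dependence of $\lambda$ on $D$ must be handled quantitatively rather than asymptotically. Extracting the sharp constants $1035$ and $10$ therefore requires a careful, essentially computer-assisted, analysis of the crossover; conceptually the argument is routine, but the numerical bookkeeping near the threshold is where the real work lies.
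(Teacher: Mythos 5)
Your proposal is correct and follows essentially the same route as the paper: it uses inequality~(\ref{uv^2}) with $\mu_k \leq \sqrt{k}$ and the bound on $\alpha^{2(k+2\lambda)}$, exploits the monotonicity of $\Lambda_k$ in $D = a^2cx^k$ (so that the inequality, once violated at a threshold value of $D$, is violated for all larger $D$), rules out $k \geq 9$ by checking failure at the minimal value $D = 2^k$, and pins down the thresholds $1035\cdot 2^7$ and $10\cdot 2^8$ for $k = 7, 8$ by explicit numerical evaluation. The paper organizes the large-$k$ case slightly differently (handling all $k \geq 10$ at once via the decreasing function $\Lambda(K)$ and treating $k = 9$ separately), but this is an organizational rather than a conceptual difference.
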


\begin{proof}
Suppose that $k \geq 10$.  Then, by inequality~(\ref{Lambdak}),
$\lambda < \Lambda(k)$ and, since $\Lambda$ is a decreasing function,
$\Lambda(k) \leq \Lambda(10)$.
A direct calculation then yields that $\lambda < 3.7$.  
Also, $\mu_k \leq \sqrt{k}$ and
$uv^2 = a^2cx^k - 1 \geq 2^{10} - 1$.
Thus, since $\alpha^k = 1 + \frac{1}{uv^2}$,
inequality~(\ref{uv^2}) implies that
\begin{align*}
63 & < (2^{10} - 1)^{.6}
< \left(uv^2\right)^{(k-2\lambda-2)} 
 <  2^{8}\mu_k^{2} 
\left({\alpha^k}\right)^{2+4\lambda/k}
k^{-(k-2\lambda)} \\
& \leq 2^{8}\left({\alpha^k}\right)^{2+4\lambda/k}
k^{-(k-2\lambda-1)} 
 <  2^8 \left(1 + \frac{1}{2^{10} - 1}\right)^{3.48} 10^{-1.6} 
 <  7,
\end{align*}
a contradiction.  

Next suppose that $k = 9$.  By inequality~(\ref{2^k}) and
a calculation, $\lambda \leq \Lambda_9(2^9) < 3.2$.
Hence,
\begin{align*}
42 & < (2^{9} - 1)^{.6}
< \left(uv^2\right)^{(k-2\lambda-2)}
 <  2^{8}\mu_k^{2} \left({\alpha^k}\right)^{2+4\lambda/k}
k^{-(k-2\lambda)} \\
& < 2^8 \cdot 3 \left(1 + \frac{1}{2^{9} - 1}\right)^{3.43}
9^{-2.6} < 3,
\end{align*}
also a contradiction.

Now, if $k = 8$ and $(k,a^2cx^k) \notin S$, then,
$a^2cx^k \geq 10\cdot 2^8$.
Since $\Lambda_8$ is a decreasing function, a direct
calculation yields
$\lambda \leq \Lambda_8(10\cdot 2^8) < 2.86$.
As above, we find that
\begin{align*}
9  & < (10\cdot 2^{8} - 1)^{.28}
< \left(uv^2\right)^{(k-2\lambda-2)}
 <  2^{8}\mu_k^{2} \left({\alpha^k}\right)^{2+4\lambda/k}
k^{-(k-2\lambda)} \\
& < 2^8 \cdot 2^2 \left(1 + \frac{1}{10\cdot 2^{8} - 1}\right)^{3.43}
8^{-2.28} < 9,
\end{align*}
again, a contradiction.

Finally, if $k = 7$ and $(k,a^2cx^k) \notin S$, then we have
$a^2cx^k \geq 1035\cdot 2^7$ and so
$\lambda \leq \Lambda_7(1035\cdot 2^7) < 2.4162$.
It follows that
\begin{align*}
7.218  & < (1035\cdot 2^7 - 1)^{.1676}
< \left(uv^2\right)^{(k-2\lambda-2)}
 <  2^{8}\mu_k^{2} \left({\alpha^k}\right)^{2+4\lambda/k}k^{-(k-2\lambda)} \\
& < 2^8 7^{1/3} \left(1 + \frac{1}{1035\cdot 2^7 - 1}\right)^{3.3807} 
7^{-2.1676} < 7.213,
\end{align*}
completing the proof.
\end{proof}

Now that we have the values of
$(k,a^2cx^k)$ restricted to a finite set, we employ
continued fractions, in particular, that of $\alpha/x$.
For ease of notation,
let \[C = C_{k,a,c} = \left(\frac{2^kac - 2}{2^kac}\right)^{1/k}.\]

Returning to equation~(\ref{maineq}), we have
$(a^2cx^k - 1)b^2cy^k - a^2cx^k = (abcz^{k})^2 - 2abcz^k$,
and so $(uv^2)b^2cy^k > a^2b^2c^2z^{k}(z^k - 2/(abc))
\geq a^2b^2c^2z^{k}(z^k - 2/(ac))$.
Multiplying each side by $x^k/(uv^2b^2cz^{2k})$ yields
\[\beta^k = \frac{x^ky^k}{z^{2k}} >
\frac{a^2cx^k}{uv^2}\cdot\frac{z^k - 2/(ac)}{z^k}
\geq C^k\alpha^k.\]
Thus, $\alpha > \beta > C\alpha$ and so,
for each $0\leq i \leq k - 1$, 
$\alpha^{k-1-i}\beta^i >
C^{k-1}\alpha^{k-1}$.
Combining this with inequality~(\ref{alpk-betk}), we have
\[\frac{2\alpha^k}{uvw + 1} > \alpha^k - \beta^k >
(\alpha - \beta)k C^{k-1}\alpha^{k-1}\]
and, solving for $\alpha - \beta$, we obtain
\begin{equation}\label{alpha-beta2}
\alpha - \beta  < 
\frac{2\alpha}{k(uvw + 1)} C^{-k+1}
 \leq \frac{2\alpha}{k a c z^k} C^{-k+1}.
\end{equation}
Thus 
\[\left|\frac{\alpha}{x} - \frac{y}{z^2}\right| = \frac{\alpha - \beta}{x}
< \frac{2\alpha}{k a c x z^k} C^{-k+1} < \frac{1}{2z^4}\]
and, therefore, $y/z^2$ is a continued fraction
convergent of $\alpha/x$.

Using standard notation, 
let $[a_0,a_1,a_2,\dots]$ be the continued fraction
expansion of $\alpha/x$, 
with corresponding convergents denoted by $p_i/q_i$.
Fix $J \geq 0$ such that $y/z^2 = p_J/q_J$ and note that,
since $\alpha > \beta$, $J$ is even.  Further, from the definition
of $\alpha$, we have that $\alpha < 2$ and so $\alpha/x < 1$.  Hence,
$p_0/q_0 = a_0 = 0$.  Since $y > 0$, $J \neq 0$.

Combining inequalities~(\ref{alpha-beta>}) and~(\ref{alpha-beta2}),
\[(8k\mu_kuv^2)^{-1}z^{-2\lambda} < 
\frac{2\alpha}{k a c z^k} C^{-k+1}\]
and thus
\[z^{k-2\lambda} < 16\mu_k \alpha \frac{uv^2}{a c} C^{-k+1}.\]
Since $\gcd(p_J,q_J) = 1$, this implies that
\begin{equation}\label{qJ}
q_J \leq z^2 < 
\left(16\mu_k \alpha \frac{uv^2}{a c}C^{-k+1}\right)^{2/(k-2\lambda)}.
\end{equation}
For each possible $(k,a,c,x)$ such that $(k,a^2cx^k)\in S$, 
we compute this upper bound for $q_J$.  Since
the $q_i$ form an increasing sequence, a direct computation of the
convergents of $\alpha/x$ yields a finite number of possible
values for $J$. 

Now, from inequality~(\ref{wlb}),
$(uvw)^2 > k^{k}u^{k}v^{2k}$ and so
$abcz^k > uvw > k^{k/2}u^{k/2}v^{k}$.
Reducing equation~(\ref{maineq}) modulo $b$ yields
$a^2cx^k \equiv 0 \pmod b$ and hence $b\leq a^2cx^k$.
Thus, $z^k > (\sqrt{kuv^2})^ka^{-3}c^{-2}x^{-k}$ implying that
\begin{equation}\label{z>}
z > \sqrt{kuv^2}a^{-3/k}c^{-2/k}x^{-1}.
\end{equation}

Using standard properties of continued fractions,
\[\frac{\alpha}{x} - \frac{y}{z^2} > \frac{1}{q_J^2(a_{J+1} + 2)}
\geq \frac{1}{z^4(a_{J+1} + 2)}.\]
Again, using inequality~(\ref{alpha-beta2}), we have that
\[\frac{1}{z^4(a_{J+1} + 2)} <  
\frac{2\alpha}{k a c x z^k} C^{-k+1}\]
and so, using inequality~(\ref{z>}),
\begin{equation}\label{aJ+1}
a_{J+1} > \frac{kacxz^{k-4}}{2\alpha}C^{k-1} - 2
> \frac{kacx}{2\alpha}\left(\frac{\sqrt{kuv^2}}{a^{3/k}c^{2/k}x}\right)^{k-4}
C^{k-1} - 2.
\end{equation}
But a direct calculation of $a_{J+1}$ for each possible $(k,a,c,x,J)$
yields a contradiction to inequality~(\ref{aJ+1}).

Thus, there is no such solution to equation~(\ref{maineq}).

\vskip 10pt
\noindent
Department of Mathematics \\
Bryn Mawr College \\
Bryn Mawr, PA 19010 \\
egoedhart@brynmawr.edu\\
grundman@brynmawr.edu

\end{document}